\theoremstyle{plain}
\newtheorem{theorem}{Theorem}[section]
\newtheorem{lem}[theorem]{Lemma}
\newtheorem{prop}[theorem]{Proposition}
\theoremstyle{remark}
\newcommand{\beq}[1]{\begin{equation}\label{#1}}
\newcommand{\enq}[0]{\end{equation}}
\theoremstyle{plain}
\theoremstyle{remark}
\title{ ON GENERATING SETS OF LEFT-COMPRESSED INTERSECTING FAMILIES}
\author{Nguyen Trong Tuan$^{1,2}$}
\email{23n21103@student.hcmus.edu.vn (Nguyen Trong Tuan)}
\author{Nguyen Anh Thi$^{1,2}$}
\email{nathi@hcmus.edu.vn (Nguyen Anh Thi)}
\address{$^1$Faculty of Mathematics and Computer Science, University of Science, Ho Chi Minh City, Vietnam}
 \address{$^2$ Vietnam National University, Ho Chi Minh City, Vietnam}
\author{ Tran Dan Thu$^3$}
\email{thu.tran@umt.edu.vn (Tran Dan Thu)}
\address{$^3$University of Management and Technology, Ho Chi Minh City, Vietnam}
\begin{document}
\begin{abstract} We construct a left-compressed family $\mathcal{F}(n,k,\mathcal{G})$ generated by a collection of generating sets $\mathcal{G}$, and identify conditions on $\mathcal{G}$ under which $\mathcal{F}(n,k,\mathcal{G})$ is an intersecting family. From this, we obtain a convenient method to construct left-compressed intersecting families as well as extending a left-compressed intersecting family. After that, we provide some comparisons between the theorem of Bond and our result.
\end{abstract}
\maketitle

\section{Introduction}
\subsection{Overview of the left-compressed intersecting families}
Throughout this paper, a subset $A$ of $[n]$ consisting of $r$ elements is called an $r$-set and is always listed in ascending order: $A=\{a_{1},a_{2},\ldots,a_{r}\}$ where $1\leq a_{1}< a_{2}<\ldots<a_{r} \leq n$. For each positive integer $n$, we denote $[n]=\{1,2,\ldots,n\}$. For $m \leq n$ be positive integers, we denote $[m,n]= \{m,m+1,\ldots,n\}$. \\ \\
Assume that $n$ and $k$ are positive integers satisfying $4 \leq 2k \leq n$. We denote $\binom{[n]}{k}$ as the collection of all $k$-set of $[n]$. A family $\mathcal{A}$ is {\bf \emph{$k$-uniform}} if every member of $\mathcal{A}$ contains exactly $k$ elements. A family $\mathcal{A} \subseteq 2^{[n]}$ is called {\bf \emph{ intersecting }} if for all $S,T \in \mathcal{A}$, we have $S \cap T \neq \emptyset$. An intersecting family $\mathcal{A}$ is {\bf \emph{ trivial }} if $\bigcap_{A \in \mathcal{A}}A=\emptyset$. The families $\mathcal{A}$ and $ \mathcal{B}$ are said to be {\bf \emph{cross- intersecting }} if any $A \in \mathcal{A}$ and $B \in \mathcal{B}$, we have $A \cap B \neq \emptyset$. Given two sets $A,B \in \binom{[n]}{k}$ with $A=\{a_{1},a_{2},\ldots,a_{k} \}$ and $B=\{b_{1},b_{2},\ldots,b_{k}\}$, we define $A \leq B$ if $a_{i} \leq b_{i}$ for all $i=1,2,\ldots,k$. We also define $A < B$ if $A \leq B$ and there exists $1 \leq i \leq k$ such that $a_{i} < b_{i}$. We say that two sets $A$ and $B$ (not necessarily distinct) are {\bf \emph{ strongly intersecting }} and write $A \sim_{si} B$ if for any $A',B'$ such that $A'\leq A, B'\leq B$ then $A' \cap B' \neq \emptyset$. A family is said to be {\bf\emph{ strongly intersecting}} if every pair of its members is strongly intersecting.\\
Next, we shall consider a family possessing a special property that plays an important role in our study. {The standard compression technique (also known as the shifting technique) is a useful tool for proving results in extremal set theory. In particular, it is used in the proofs of the Erdős–Ko–Rado and Hilton–Milner theorems. A left-compressed family can be defined in several ways. The first definition given here is closely related to the compression (or shifting) technique (see \cite{Fr87} for a survey). A family $\mathcal{A}\subseteq \binom{[n]}{k}$ is said to be {\bf\emph{left-compressed}} (or {\bf\emph{shifted}}) if for all $i<j$ and all $A\in \mathcal{A}$ with $j\in A$ and $i\neq A,$ the set $A'=A-\{j\}\cup \{i\}$ is also in $\mathcal{A}$.  Any family set can be transformed into a left-compressed family  by using the following operations: Define an $(i,j)$-shift of $\mathcal{A}$ to be the set system $\mathcal{A'}$ obtained by replacing $A\in \mathcal{A}$ with $j\in A$ and $i\neq A$ with $A'=A-\{j\}\cup\{i\}$ if $A'\notin \mathcal{A}.$ We can verify that upon applying an $(i,j)$-shift, an intersecting family remains intersecting. Furthermore, upon finitely many $(i,j)$-shift, we obtain a shifted (left-compressed) family. The second definition of a left-compressed family is as follows. Given two sets $A,B \in \binom{[n]}{k}$ with $A=\{a_{1},a_{2},\ldots,a_{k} \}$ and $B=\{b_{1},b_{2},\ldots,b_{k}\}$, we define $A \leq B$ if $a_{i} \leq b_{i}$ for all $i=1,2,\ldots,k$. We also define $A < B$ if $A \leq B$ and there exists $1 \leq i \leq k$ such that $a_{i} < b_{i}$. 
A family $\mathcal{A} \subseteq \binom{[n]}{k}$ is said to be {\bf\emph{left-compressed}} if for any $A \in \mathcal{A}$ and for any $ B$ such that $B \leq A$, we also have $B \in \mathcal{A}$. The equivalence of the two definitions above can be proved easily. In this paper, we primarily use the second definition to evaluate the generating sets of a left-compressed intersecting family.} For $A \in \binom{[n]}{k}$, we define $\mathcal{L}(A)=\{S \in \binom{[n]}{k}: S \leq A\}$. It is obvious that $\mathcal{L}(A)$ is left-compressed and we say that $\mathcal{L}(A)$ is {\bf \emph{ left-compression
closure }}  of $A$. A $k$-set $A$ in the left-compressed family $\mathcal{A} \subseteq \binom{[n]}{k}$ is {\bf \emph{ maximal set}} if there does not exist any $B \in \mathcal{A}$ such that $A < B$. To describe left-compressed families conveniently, we extend the order relation $"\leq"$ as follows.
For two sets $A=\{a_{1},a_{2},\ldots,a_{r}\} \in 2^{[n]}$ and $B=\{b_{1},b_{2},\ldots,b_{s} \} \in 2^{[n]}$, we define $A \preceq B$ if  $r \geq s$ and $a_{i} \leq b_{i}$ for all $1 \leq i \leq s$. Note that the order relations "$\leq$" on $\binom{[n]}{k}$ and "$\preceq$" on $2^{[n]}$ are both partial orders. Therefore, every nonempty subset of $\binom{[n]}{k}$ (or $2^{[n]}$) has at least one maximal element.\\ \\
We refer to a family that is both intersecting and left-compressed as a {\bf \emph{ left-compression intersecting family}} and we can abbreviate as {\bf \emph{ LCIF}}. It is obvious that if $\mathcal{A}$ is LCIF and $A,B \in \mathcal{A}$ then $A \sim_{si}B$, equivalently, $\mathcal{F}({A})$ and $\mathcal{F}(B)$ are cross-intersecting. Finally, we say an intersecting family $\mathcal{A} \subseteq \binom{[n]}{k}$ is {\bf \emph{ maximal}} if no other set can be added to $\mathcal{A}$ while preserving the intersecting property. Note that a maximal left- compressed intersecting family (abbreviated {\bf \emph{ MLCIF}}) is precisely a maximal intersecting family which is left-compressed. There are some well-known MLCIF families: the {\bf \emph{Star family}} $S=\{ A\in \binom{[n]}{k}: 1\in A\}$; the family $\mathcal{A}_{2,3}=\{A \in \binom{[n]}{k}:|A\cap \{1,2,3\}|\geq 2\}$ and the {\bf \emph{ Hilton-Milner family}} $\mathcal{H}\mathcal{M}=\{A\in \mathcal{S}:A\cap \{2,\ldots,k+1\} \neq \emptyset \}\cup [2,k+1]$.
\\ \\
 To study the intersection property of a left-compressed family, we first need to find a way to describe left-compressed families. We find that if $A \preceq G$ and $B \leq A$ then $B \preceq G$. This observation provides the basis for defining a left-compressed family over $\binom{[n]}{k}$. This definition is a modification of the definition of Ahlswede and Khachatrian \cite{AK}. 
Let $\mathcal{G}$ be a collection of subsets of $[n]$. We define a $k$-uniform family as follows:
    $$\mathcal{F}(n,k,\mathcal{G})=\bigg \{S \in \binom{[n]}{k} : S \preceq G \text{ for some } G \in \mathcal{G}  \bigg \}$$
$\mathcal{G}$ is called {\bf \emph{ a collection of generating sets}} and each $G \in \mathcal{G}$ is called a {\bf \emph{generator}} of $\mathcal{F}(n,k,\mathcal{G})$ . It is clear that if there exists $G \in \mathcal{G}$ such that $|G| > k$ then there does not exist $k$-set such that $S \preceq G$. Therefore, we assume that every $G \in \mathcal{G}$ contains at most $k$ elements. When there is no risk of confusion, we may write $\mathcal{F}(\mathcal{G})$ instead of $\mathcal{F}(n,k,\mathcal{G})$. In the special case when $\mathcal{G}=\{G\}$ with $G=\{g_{1},\ldots,g_{r}\}$, we can write $\mathcal{F}(G)=\mathcal{F}(g_{1},\ldots,g_{r})$.
Hence, $\mathcal{F}(G)$ consists of all $k$-sets $S$ such that $S \preceq G$. If $|G|=k$ then $\mathcal{F}(G)$ is the set of all $k$-set $S$ satisfying $S \leq G$ and in this case, we have $\mathcal{L}(G)=\mathcal{F}(G)$. We see that $\mathcal{F}(\mathcal{G})$ is left-compressed and $\mathcal{F}(\mathcal{G})=\bigcup_{G \in \mathcal{G}}\mathcal{F}(G)$. It is clear that not every $\mathcal{F}(\mathcal{G})$ is an intersecting family. For example, the families $\mathcal{F}(\{\{2,3\}\}),\mathcal{F}(\{\{1,3\},\{1,4,5\},\{2,3,5\} \}$ are intersecting, while the families $\mathcal{F}(\{\{2,4\}\}), \mathcal{F}(\{\{2,3\},\{2,4,5\} \}$ are not intersecting. Therefore, a natural question arises: under what conditions on $\mathcal{G}$ the $\mathcal{F}(\mathcal{G})$ is an intersecting family?\\
Since $\mathcal{F}(\mathcal{G})=\bigcup_{G \in \mathcal{G}}\mathcal{F}(G)$, we have $\mathcal{F}(\mathcal{G})$ is intersecting if, for every $G \in \mathcal{G}$, the family $\mathcal{F}(G)$ is intersecting and moreover, $\mathcal{F}(G)$ and $\mathcal{F}(H)$ are cross-intersecting for all pairs $G, H \in \mathcal{G}$. Hence, we conclude that $G \sim_{si} H $ for all $G,H \in \mathcal{G}$. In other words, $\mathcal{G}$ is strongly intersecting. Thus, an important problem is how to determine whether $G \sim_{si} H$. By definition, to determine whether $G \sim_{si} H$, we have to consider the intersection $G' \cap H'$ for every $G' \leq G$ and $H' \leq H$. This is a complicated task, especially when $G$ and $H$ have a complex structure. Bond \cite{BB} established a condition for two $k$-set to be strongly intersecting: {\bf \emph{Two $k$-set $A$ and $B$ are strongly intersecting if and only if there exists a pair of indices $1 \leq i,j \leq k$ such that $i+j \geq max\{a_{i},b_{j}\}$}}.\\ \\
We develop Bond’s idea to derive a condition for two arbitrary sets to be strongly intersecting. From this, we have established the condition on $\mathcal{G}$  for the family $\mathcal{F}(\mathcal{G})$ to be intersecting. The following is our main theorem. 
\begin{theorem}\label{thr} Let $\mathcal{G} \subseteq  2^{[n]}$. The family $\mathcal{F}(\mathcal{G})$ is intersecting if and only if and only if for every $G, H \in \mathcal{G}$ (possibly identical), there exists an $1 \le l \le l$ such that $\mu_{G}(l) + \mu_{H}(l) > l$. 
\end{theorem}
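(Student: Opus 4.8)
The plan is to reduce the statement to a pairwise cross-intersection criterion and then prove that criterion by a counting argument in one direction and an explicit construction via Hall's marriage theorem in the other. As already observed in the discussion preceding the theorem, since $\mathcal{F}(\mathcal{G})=\bigcup_{G\in\mathcal{G}}\mathcal{F}(G)$, the family $\mathcal{F}(\mathcal{G})$ is intersecting if and only if $\mathcal{F}(G)$ and $\mathcal{F}(H)$ are cross-intersecting for every (possibly equal) pair $G,H\in\mathcal{G}$. Thus it suffices to prove the following key lemma: for $G,H\subseteq[n]$ with $|G|,|H|\le k$, the families $\mathcal{F}(G)$ and $\mathcal{F}(H)$ are cross-intersecting if and only if there is some $1\le l\le n$ with $\mu_{G}(l)+\mu_{H}(l)>l$, where $\mu_{G}(l)$ is the number of elements of $G$ not exceeding $l$.

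For the easy direction I would argue by counting on the initial segment $[l]=\{1,\dots,l\}$. Suppose $\mu_{G}(l)+\mu_{H}(l)>l$ for some $l$, and take any $S\in\mathcal{F}(G)$ and $T\in\mathcal{F}(H)$. Writing $p=\mu_{G}(l)$, the definition of $S\preceq G$ gives $s_{i}\le g_{i}\le l$ for each $i\le p$, so $|S\cap[l]|\ge\mu_{G}(l)$, and likewise $|T\cap[l]|\ge\mu_{H}(l)$. Then
\[
|S\cap[l]|+|T\cap[l]|\ge\mu_{G}(l)+\mu_{H}(l)>l=|[l]|,
\]
so $S\cap[l]$ and $T\cap[l]$ cannot be disjoint; hence $S\cap T\neq\emptyset$ and the two families are cross-intersecting.

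The harder direction is the construction, which I would phrase through Hall's theorem. Arguing by contraposition, assume $\mu_{G}(l)+\mu_{H}(l)\le l$ for every $l$; I must produce $S\in\mathcal{F}(G)$ and $T\in\mathcal{F}(H)$ with $S\cap T=\emptyset$. Set up a bipartite graph with $2k$ left vertices, one for each of the $k$ intended elements of $S$ and of $T$, and right vertex set $[n]$, joining the $i$-th $S$-vertex to $[1,g_{i}]$ (to $[1,n]$ when $i>|G|$) and the $j$-th $T$-vertex to $[1,h_{j}]$ (to $[1,n]$ when $j>|H|$). A left-perfect matching assigns $2k$ distinct values, and after sorting the values assigned to the $S$-vertices and to the $T$-vertices separately one obtains disjoint $k$-sets; a short rearrangement lemma (if distinct values $v_{i}$ satisfy $v_{i}\le c_{i}$ with $c_{1}\le\dots\le c_{k}$, then their increasing reordering $u_{i}$ still satisfies $u_{i}\le c_{i}$) guarantees $S\preceq G$ and $T\preceq H$. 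Because every neighborhood here is an initial interval $[1,M]$, Hall's condition collapses to the threshold inequalities: for any set $W$ of left vertices with largest bound $M$, all vertices of $W$ have bound at most $M$, so $|W|\le\mu_{G}(M)+\mu_{H}(M)\le M=|N(W)|$ (the case $M=n$ being trivial since $2k\le n$). Hence the matching exists and the desired disjoint sets are produced.

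The main obstacle is this second direction: turning the purely numerical hypothesis $\mu_{G}(l)+\mu_{H}(l)\le l$ into an actual pair of disjoint sets respecting the coordinatewise bounds of $\preceq$. The interval structure of the bounds is exactly what makes Hall's condition reduce to the $n$ threshold inequalities, so the crux is recognizing that the problem is a bipartite matching with staircase neighborhoods and verifying that sorting the matched values preserves the relation $\preceq$. Once the lemma is established, the theorem follows immediately by applying it to every pair $G,H\in\mathcal{G}$.
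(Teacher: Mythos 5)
Your proposal is correct, and its overall architecture matches the paper's: both reduce the theorem to showing that $\mathcal{F}(G)$ and $\mathcal{F}(H)$ are cross-intersecting for every pair $G,H\in\mathcal{G}$, and both prove the easy direction by the same pigeonhole count on the initial segment $[l]$. Where you genuinely diverge is in the crux step, producing disjoint $S\preceq G$, $T\preceq H$ under the hypothesis $\mu_G(l)+\mu_H(l)\le l$ for all $l$. The paper does this by a hands-on induction on $l$: it builds sets $G_l\le G\cap[l]$ and $H_l\le H\cap[l]$ that are disjoint and tile the initial segment $[z_l]$ where $z_l=\mu_G(l)+\mu_H(l)$, splitting into cases according to whether $z_{l+1}-z_l$ is $0$, $1$, or $2$, and then pads both sets with disjoint tails from $[m+1,n]$ using $2k\le n$. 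You instead set up a bipartite matching problem with staircase (initial-interval) neighborhoods and observe that Hall's condition collapses exactly to the $n$ threshold inequalities $\mu_G(M)+\mu_H(M)\le M$; your rearrangement lemma (sorting matched values preserves the coordinatewise bounds, since the bound sequence is non-decreasing) is correct and is the right way to recover $S\preceq G$ from an unordered matching. The two arguments are really the same construction seen from different angles --- the paper's greedy packing of $[z_l]$ is a constructive witness for the matching your Hall argument asserts --- but yours buys brevity and a conceptual explanation of why the single-parameter condition suffices, at the cost of importing Hall's theorem and the sorting lemma, while the paper's version is fully self-contained and explicitly exhibits the disjoint pair. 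Both correctly use the standing hypotheses $|G|,|H|\le k$ and $2k\le n$ at the padding/saturation step.
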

Now, let $\mathcal{F}(G)$ be an LCIF. Take any $G=\{g_{1},\ldots,g_{s}\} \in \mathcal{G}$. Assume that $g_{1} \geq k+1$. Consider the two sets $A=\{1,\ldots,k\}$ and $B=G \cup C$, where $C \subseteq [g_{s}+1,n]$ and $|C|=k-s$. We have $B \preceq G$, so $B \in \mathcal{F}(\mathcal{G})$. It is obvious that $A \leq B$. Hence $A \in \mathcal{F}(\mathcal{G})$. However, this is a contradiction since $A \cap B=\emptyset$. Therefore $g_{1} < k+1$. Thus, there exists a largest index $r$ such that $g_{r} < k+r$. We call this number $r$ {\bf \emph{ type}} of $G$ and define $\pi(G)=\{g_{1},\ldots,g_{r}\}$. We also define $\pi(\mathcal{G})=\{\pi(G): G \in \mathcal{G}\}$.
If $\mathcal{F}(\mathcal{G})$ is an LCIF, then it can be contained in a larger left-compressed intersecting family, as shown in the following theorem.
\begin{theorem} Suppose $\mathcal{G} \subseteq 2^{[n]}$ and $\mathcal{F}(G)$ is LCIF. Then $\mathcal{F}(\pi(\mathcal{G}))$ is also LCIF, and $\mathcal{F}(G) \subseteq \mathcal{F}(\pi(\mathcal{G})) $
\end{theorem}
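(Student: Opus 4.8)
The plan is to verify the two assertions separately, noting first that $\mathcal{F}(\pi(\mathcal{G}))$ is automatically left-compressed, since every family of the form $\mathcal{F}(\mathcal{H})$ is left-compressed; thus only the inclusion and the intersecting property require work. For the inclusion I would argue generator by generator. Fix $G=\{g_{1},\dots,g_{s}\}\in\mathcal{G}$ of type $r$, so that $\pi(G)=\{g_{1},\dots,g_{r}\}$ with $r\le s\le k$. If $S\in\binom{[n]}{k}$ satisfies $S\preceq G$, then $k\ge s\ge r$ and $s_{i}\le g_{i}$ for all $i\le s$, hence in particular for all $i\le r$; this is exactly $S\preceq\pi(G)$. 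Therefore $\mathcal{F}(G)\subseteq\mathcal{F}(\pi(G))$, and taking unions over $\mathcal{G}$ yields $\mathcal{F}(\mathcal{G})\subseteq\mathcal{F}(\pi(\mathcal{G}))$.

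For the intersecting property I would invoke Theorem~\ref{thr}: it suffices to show that for all $G,H\in\mathcal{G}$ there is an index $l$ with $\mu_{\pi(G)}(l)+\mu_{\pi(H)}(l)>l$, where $\mu_{G}(l)$ counts the elements of $G$ not exceeding $l$. The key elementary observation is that truncation does not alter $\mu$ in the relevant range: if $G$ has type $r$, then every tail element satisfies $g_{i}\ge k+i\ge k+r+1$ for $i>r$, so no tail element is $\le l$ once $l\le k+r$; hence $\mu_{\pi(G)}(l)=\mu_{G}(l)$ for all $l\le k+r$. Consequently, if $G,H$ have types $r_{G}\le r_{H}$ and the relation $G\sim_{si}H$ is witnessed by some $l\le k+r_{G}$, then that same $l$ witnesses $\pi(G)\sim_{si}\pi(H)$, and we are done by Theorem~\ref{thr}.

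The hard part is therefore to show that a witness can always be chosen in this small range. Since $\mathcal{F}(\mathcal{G})$ is intersecting we have $G\sim_{si}H$, so by Theorem~\ref{thr} a witness exists; let $l^{\ast}$ be the smallest one and suppose, toward a contradiction, that $l^{\ast}>k+r_{G}$ (taking $r_{G}\le r_{H}$ without loss of generality). Writing $f(l)=\mu_{G}(l)+\mu_{H}(l)-l$, note that $f$ increases by at most $1$ at each step because $G$ and $H$ are sets; hence $f(l^{\ast})\ge 1$ together with $f(l^{\ast}-1)\le 0$ forces $f(l^{\ast})=1$ and $l^{\ast}\in G\cap H$. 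Writing $l^{\ast}=g_{p}=h_{q}$, we obtain $\mu_{G}(l^{\ast})=p$, $\mu_{H}(l^{\ast})=q$, and $p+q=l^{\ast}+1$. Because $l^{\ast}>k+r_{G}$ while $g_{i}\le g_{r_{G}}<k+r_{G}$ for $i\le r_{G}$, we must have $p>r_{G}$, so the type bound gives $l^{\ast}=g_{p}\ge k+p$; substituting, $q=l^{\ast}+1-p\ge k+1$, contradicting $q\le|H|\le k$. Hence the minimal witness satisfies $l^{\ast}\le k+r_{G}$, which by the previous paragraph transfers to $\pi(G)$ and $\pi(H)$, establishing that $\mathcal{F}(\pi(\mathcal{G}))$ is intersecting and completing the proof.
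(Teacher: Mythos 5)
Your proof is correct, and it follows the same overall strategy as the paper's: establish the inclusion via the observation that truncating a generator moves it up in the $\preceq$ order, and establish the intersecting property via Theorem~\ref{thr} by showing that a witness $l$ for $\mu_G(l)+\mu_H(l)>l$ can be taken small enough (at most $k$ plus the smaller type) that truncation does not change $\mu_G(l)$ or $\mu_H(l)$. Where you genuinely diverge is in how you bound the witness. The paper proves that \emph{every} witness satisfies $l\le k+r-1$ by an exhaustive case analysis on where $l$ sits relative to the elements $g_r, g_{r+1},\dots,g_{|G'|}$ (an analysis that, as written, actually omits the subcase $k+r<l<g_{r+1}$, though it is easily filled). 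You instead take the \emph{minimal} witness $l^{\ast}$, observe that $f(l)=\mu_G(l)+\mu_H(l)-l$ changes by at most $1$ per step so that $f(l^{\ast})=1$ and $l^{\ast}\in G\cap H$, and then derive the contradiction $q\ge k+1$ from the type bound $g_p\ge k+p$. Your route avoids the case analysis entirely, is a little cleaner, and does not suffer from the omission noted above; the paper's route has the (unused) advantage of showing that \emph{all} witnesses, not just the minimal one, lie in the safe range. One tiny presentational point: when you conclude ``$p>r_G$, so the type bound gives $l^{\ast}=g_p\ge k+p$,'' you should note that if $r_G=|G|$ then no such $p$ exists and the contradiction with $l^{\ast}\in G$ is immediate; either way the argument closes.
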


The paper is organized into four parts. In the first part, we recall the basic notions and fundamental properties, and briefly describe the overall structure of the paper. The second part contains the proofs of Theorems 1.1 and 1.2, together with several related results and a comparison with Bond’s work. The third part is devoted to the proof of Theorem 1.3 and Theorem 1.4, concerning maximal left-compressed intersecting families (MLCIFs). Finally, the last part presents several open problems and possible directions for further research.

\section{Left-compressed intersecting families}
\subsection{Preliminaries}
We proceed to establish several supporting properties that will be used in the proof of Theorem 1.1. We need a function that describes the relationship between a set $X$ and an element $l$. For a non-empty set $X$ and a non-zero number $l$, we define $\mu_{X}(l)$ as the number of elements in $X$ that are less than or equal to $l$. This is a particularly useful tool when considering the strongly intersecting property of two sets. We note that if $x_{i}= l$ then $\mu_{X}(l)=i$. So we can write $x_{\mu_{X}(l)}=l$. We also note that for any $X \subseteq [n]$, we have $0 \leq \mu_{X}(l) \leq |X|,$ and for fixed $X$, $\mu_{X}(l)$ is a non-decreasing function in $l$. For $l \geq 2$, we also have $\mu_{X}(l) - \mu_{X}(l-1) \in \{0,1\}$. Next, we establish some additional properties of the function $\mu_{X}(l)$ that relate to the two sets $A,B \in 2^{[n]}$.

\begin{prop} Let $X,A,B \subseteq [n]$ be non-empty subsets. Then, for all $l \in [n]$, the following hold: \\
(i) If $A \preceq B$ then $\mu_{A} (l) \geq \mu_{B}(l)$ ;\\
(ii) If $A \leq B$ then $\mu_{A}(l) \geq \mu_{B}(l)$;\\
(iii) If $A \subseteq B $ then $\mu_{B}(l) \geq \mu_{A}(l)$.
\end{prop}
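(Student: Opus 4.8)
The plan is to argue directly from the definition of $\mu_X(l)$ as the number of elements of $X$ that do not exceed $l$, handling (i) first, then deducing (ii) as a special case, and treating (iii) on its own.

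For (i), I would write $|A| = r$ and $|B| = s$, so that $A \preceq B$ supplies $r \geq s$ together with $a_i \leq b_i$ for all $1 \leq i \leq s$. The key move is to set $j = \mu_B(l)$ and observe that $j \leq |B| = s$, so the elements of $B$ not exceeding $l$ are precisely $b_1, \ldots, b_j$. Since $j \leq s$, the coordinatewise inequalities $a_i \leq b_i$ are available for each $i \leq j$, and since $j \leq s \leq r$ these $a_i$ genuinely exist as elements of $A$; hence $a_i \leq b_i \leq l$ for $i = 1, \ldots, j$, exhibiting at least $j$ elements of $A$ that are $\leq l$. This yields $\mu_A(l) \geq j = \mu_B(l)$, as wanted.

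Claim (ii) should then be immediate, since $A \leq B$ for $A, B \in \binom{[n]}{k}$ means $|A| = |B| = k$ together with $a_i \leq b_i$ for all $i$, which is exactly the relation $A \preceq B$; applying (i) closes this case. For (iii), under $A \subseteq B$ I would simply note that every element of $A$ counted by $\mu_A(l)$ is also an element of $B$ counted by $\mu_B(l)$, so monotonicity of cardinality under inclusion—applied to $A \cap [1,l] \subseteq B \cap [1,l]$—gives $\mu_B(l) \geq \mu_A(l)$.

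I do not anticipate a genuine obstacle here; the only place demanding care is the index bookkeeping in (i), namely verifying that $j = \mu_B(l) \leq s$ so that the inequalities $a_i \leq b_i$ cover all the relevant indices, with the hypothesis $r \geq s$ in the definition of $\preceq$ ensuring that the required elements $a_1, \ldots, a_j$ of $A$ are present.
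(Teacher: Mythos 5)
Your proof is correct and follows essentially the same route as the paper: the heart of the matter is that the elements of $B$ not exceeding $l$ form an initial segment $b_1,\ldots,b_j$ with $j\leq s$, so the coordinatewise inequalities $a_i\leq b_i\leq l$ give $\mu_A(l)\geq j$ (you do this uniformly, the paper via a case split on $l$ that reduces to the same observation), and (ii) follows from (i) in both. The only small divergence is (iii), where you count directly via $A\cap[1,l]\subseteq B\cap[1,l]$ while the paper notes $A\subseteq B$ implies $B\preceq A$ and reuses (i); your version is if anything slightly more self-contained.
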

\begin{proof}
(i) Assume that $A=\{a_{1},a_{2},\ldots,a_{r}\}$ and $B=\{b_{1},b_{2},\ldots,b_{
s}\}$. Since $A \preceq B$, we have $r \geq s$ and\\ $\{a_{1},a_{2},\ldots,a_{s}\} \leq \{b_{1},b_{2},\ldots,b_{s}\}$
Now, we consider different cases for $l \in [n]$. If $l \geq b_{s} $ then $\mu_{B}(l)=s $. Since $a_{s} \leq b_{s} \leq l$, we also have $\mu_{A}(l) \geq s =\mu_{B}(l)$. If $l < b_{1}$ then $\mu_{B}(l) =0$, which proves the required result. Finally, consider $b_{1} \leq l < b_{s}$. Assume $|B \cap [l]|=p$. This means that there are p numbers $b_{1},\ldots,b_{p}$ that do not exceed $l$. Since $a_{1} \leq b_{1},\ldots,a_{p} \leq b_{p} \leq l$, we conclude that there are at least $p$ numbers $a_{1},\ldots,a_{p}$ that do not exceed $l$. Thus $\mu_{A}(l) \geq p =\mu_{B}(l)$.\\ 
(ii) Since  $A\leq B$, we conclude that $A \preceq B$ and from (i), we have $\mu_{A}(l) \geq \mu_{B}(l)$.\\
(iii) Since $A \subseteq B$, we have $B \preceq A$. From (i), we get $\mu_{B}(l) \geq \mu_{A}(l)$ for all $1 \leq l \leq n$.
\end{proof}
\begin{lem}\label{lem1} The family $\mathcal{F}(\mathcal{G})$ is intersecting if and only if $G \sim_{si} H $ for all $G,H \in \mathcal{G}$. 
\end{lem}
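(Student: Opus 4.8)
The plan is to prove the equivalence $\mathcal{F}(\mathcal{G})$ is intersecting $\iff$ $G \sim_{si} H$ for all $G, H \in \mathcal{G}$ by exploiting the decomposition $\mathcal{F}(\mathcal{G}) = \bigcup_{G \in \mathcal{G}} \mathcal{F}(G)$ already noted in the introduction. The key structural fact I would establish first is the translation between strong intersection of generators and cross-intersection of their generated families: namely, that $\mathcal{F}(G)$ and $\mathcal{F}(H)$ are cross-intersecting \emph{if and only if} $G \sim_{si} H$. Once this bridge is in place, the lemma follows by a routine unpacking of what it means for a union of families to be intersecting.

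Let me explain the bridge lemma, which I expect to be the heart of the matter. First I would show the easy direction: suppose $G \sim_{si} H$ and take arbitrary $S \in \mathcal{F}(G)$, $T \in \mathcal{F}(H)$. By definition $S \preceq G$ and $T \preceq H$, but since $S, T$ are $k$-sets these give $S \leq G'$ and $T \leq H'$ for appropriate truncations, and more directly $S \preceq G$ means $S$ is dominated by $G$ in the extended order. I would want to argue that $S$ and $T$ are of the form $S = A'$, $T = B'$ with $A' \leq A$, $B' \leq B$ for some $A, B$ witnessing the domination by $G, H$; strong intersection of $G, H$ then forces $S \cap T \neq \emptyset$. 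The cleanest route here is to note that if $S \preceq G$ and $S$ is a $k$-set while $|G| = s \le k$, then there is a $k$-superset-style comparison allowing us to treat $S \le \widetilde{G}$ for some $k$-set $\widetilde{G} \ge$ (the padding of) $G$; I would make this precise using the monotonicity recorded in Proposition 2.1.

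For the converse direction of the bridge I would argue contrapositively: if $G \not\sim_{si} H$, then by definition there exist $A' \leq G$-dominated and $B' \leq H$-dominated witnesses (more carefully, $G' \preceq G$, $H' \preceq H$) with $G' \cap H' = \emptyset$; from these disjoint witnesses I would construct explicit $k$-sets $S \in \mathcal{F}(G)$ and $T \in \mathcal{F}(H)$ that remain disjoint, by filling out $G'$ and $H'$ with large elements drawn from disjoint tails of $[n]$, which is possible because $2k \leq n$. This construction is where the hypothesis $2k \le n$ does real work and is the step I expect to require the most care, since I must guarantee the padding stays within $[n]$ and preserves disjointness while keeping $S \preceq G$ and $T \preceq H$.

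Finally I would assemble the lemma. For the forward implication, if $\mathcal{F}(\mathcal{G})$ is intersecting then for any $G, H \in \mathcal{G}$ the subfamilies $\mathcal{F}(G), \mathcal{F}(H) \subseteq \mathcal{F}(\mathcal{G})$ are automatically cross-intersecting (any member of one meets any member of the other inside the ambient intersecting family), so the bridge gives $G \sim_{si} H$. For the reverse implication, suppose $G \sim_{si} H$ for all $G, H \in \mathcal{G}$. Taking $S, T \in \mathcal{F}(\mathcal{G})$, there are generators $G, H \in \mathcal{G}$ with $S \in \mathcal{F}(G)$ and $T \in \mathcal{F}(H)$; since $G \sim_{si} H$ the bridge yields $S \cap T \neq \emptyset$, so $\mathcal{F}(\mathcal{G})$ is intersecting. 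Here the degenerate case $G = H$ (taking $S, T$ from the same $\mathcal{F}(G)$) is covered by the ``possibly identical'' reading of the quantifier, so no separate argument for intersection within a single $\mathcal{F}(G)$ is needed.
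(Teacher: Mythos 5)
Your proposal is correct and follows essentially the same route as the paper: the paper's proof of this lemma (together with the equivalence (i)$\Leftrightarrow$(ii) in its Lemma~\ref{lem2}) is exactly your ``bridge'' between strong intersection of generators and cross-intersection of $\mathcal{F}(G)$, $\mathcal{F}(H)$, using the same two constructions --- truncating a $k$-set $S\preceq G$ to its first $|G|$ elements in one direction, and padding disjoint witnesses $G'\leq G$, $H'\leq H$ with disjoint large elements (using $2k\leq n$) in the other. The only cosmetic remark is that the direct truncation argument is the clean one; the detour through a padded $k$-set $\widetilde{G}$ is unnecessary, since $\sim_{si}$ quantifies over sets of the same size as $G$ and $H$ themselves.
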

\begin{proof} Suppose $\mathcal{F}(\mathcal{G})$ is LCIF. We take any $G,H \in \mathcal{G}$. We need to prove that $G \sim_{si} H$. For $G' \leq G$ and $H' \leq H$, we set $A = G' \cup A'$ and $B=H' \cup B'$ with $A',B' \subseteq [n]\setminus (G' \cup H'), |A'|=k-|G'|, |B'|=k-|H'|$. Since $|A'|+|B'|=2k-(|G'|+|H'|) \leq n-(|G'|+|H'|$, it is possible to select $A'$ and $B'$
such that $A'\cap B'=\emptyset$. Now, we have $A \preceq G$ and $B \preceq H$. So $A \in \mathcal{F}(G)$ and $B \in \mathcal{F}(H)$. It follows that $A,B \in \mathcal{F}(\mathcal{G})$ and so $A \cap B \neq \emptyset$. From choosing $A',B'$ we have $G' \cap H' \neq \emptyset$. Thus, $G \sim_{si} H$. Conversely, suppose that $\mathcal{G}$ is strongly intersecting. We prove that $\mathcal{F}(\mathcal{G})$ is LCIF. Take $A,B \in \mathcal{F}(\mathcal{G})$. Then, there are $G,H \in \mathcal{G}$ such that $A \preceq G$ and $B \preceq H$. Assume that $|G|=r, |H|=s$ and $A=\{a_{1},\ldots,a_{r},\ldots,a_{k}\}, B=\{b_{1},\ldots,b_{s},\ldots,b_{k}\}$. We have $\{a_{1},\ldots,a_{r}\} \leq G$ and $\{b_{1},\ldots,b_{r}\} \leq H$. From $G \sim_{si} H$, it follows that $\{a_{1},\ldots,a_{r}\} \cap \{b_{1},\ldots,b_{r}\} \neq \emptyset$. Hence $A \cap B \neq \emptyset$.
\end{proof}
\begin{lem}\label{lem2} Let $G,H \in 2^{[n]}$. Then, the following statements are equivalent:\\
(i) $G$ and $H$ are strongly intersecting;\\
(ii) $\mathcal{F}(G)$ and $\mathcal{F}(H)$ are cross-intersecting;\\
(iii) There exists $1 \leq l \leq n$ such that $\mu_{G}(l)+\mu_{H}(l) > l$.
\end{lem}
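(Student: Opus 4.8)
The plan is to split the three-way equivalence into the routine reduction (i) $\iff$ (ii) and the combinatorial core (i) $\iff$ (iii).

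First I would handle (i) $\iff$ (ii), which is essentially the single-pair case of Lemma \ref{lem1}. For (ii) $\Rightarrow$ (i), given $G' \leq G$ and $H' \leq H$ I would pad them to $k$-sets $A = G' \cup A'$ and $B = H' \cup B'$, choosing $A', B' \subseteq [n] \setminus (G' \cup H')$ disjoint, which is possible because $|G'| + |H'| \leq 2k \leq n$. Since the $|G|$ smallest elements of $A$ are dominated by those of $G'$ and hence by $G$, we get $A \preceq G$, i.e. $A \in \mathcal{F}(G)$, and likewise $B \in \mathcal{F}(H)$; cross-intersection then forces $A \cap B \neq \emptyset$, and the disjointness of the padding forces $G' \cap H' \neq \emptyset$. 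For (i) $\Rightarrow$ (ii), any $A \preceq G$ and $B \preceq H$ have truncations $\{a_{1}, \dots, a_{|G|}\} \leq G$ and $\{b_{1}, \dots, b_{|H|}\} \leq H$, which meet by strong intersection, so $A \cap B \neq \emptyset$.

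The heart of the matter is (i) $\iff$ (iii), which I would restate in contrapositive form: disjoint sets $G' \leq G$ and $H' \leq H$ exist if and only if $\mu_{G}(l) + \mu_{H}(l) \leq l$ for every $l \in [n]$. The direction ``disjoint $\Rightarrow$ bounded sum'' is immediate from the monotonicity in part (ii) of the preceding Proposition: for each $l$,
\[
\mu_{G}(l) + \mu_{H}(l) \leq \mu_{G'}(l) + \mu_{H'}(l) = \mu_{G' \cup H'}(l) \leq l,
\]
the middle equality using disjointness and the final inequality using that $[1,l]$ contains only $l$ integers.

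The main obstacle, and the only real content, is the converse construction: from $\mu_{G}(l) + \mu_{H}(l) \leq l$ for all $l$ I must actually produce disjoint $G' \leq G$ and $H' \leq H$. I would set this up as a system of distinct representatives, viewing each $g_{i}$ (resp. $h_{j}$) as a slot admitting any value in $[1, g_{i}]$ (resp. $[1, h_{j}]$) and requiring all chosen values distinct. Processing the slots in increasing order of their bound and greedily assigning the least unused admissible value, I claim the process never stalls: a stall at a slot of bound $c$ would mean $\{1, \dots, c\}$ is already exhausted by previously processed slots, all of bound $\leq c$, yielding at least $c + 1$ slots of bound $\leq c$, i.e. $\mu_{G}(c) + \mu_{H}(c) > c$, a contradiction. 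It then remains to verify that the output sets satisfy $G' \leq G$ and $H' \leq H$: if slot $g_{i}$ receives value $w_{i} \leq g_{i}$, then at least $i$ of the values $w_{1}, \dots, w_{|G|}$ are at most $g_{i}$, so the $i$-th smallest element of $G'$ is at most $g_{i}$. As $G'$ and $H'$ are disjoint by construction, $G$ and $H$ fail to be strongly intersecting, completing the chain.
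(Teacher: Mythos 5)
Your proof is correct, and while the easy implications match the paper's, the core of your argument is organized differently. The paper proves the cycle (i) $\Rightarrow$ (ii) $\Rightarrow$ (iii) $\Rightarrow$ (i); its hard step (ii) $\Rightarrow$ (iii) builds, by induction on $l$, sets $G_{l}\leq G\cap[l]$ and $H_{l}\leq H\cap[l]$ with $G_{l}\cap H_{l}=\emptyset$ and $G_{l}\cup H_{l}=[z_{l}]$ where $z_{l}=\mu_{G}(l)+\mu_{H}(l)$, and then must pad $G_{m},H_{m}$ to disjoint $k$-sets in $\mathcal{F}(G)$ and $\mathcal{F}(H)$ (using $2k\leq n$ once more) to contradict cross-intersection. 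You instead split off (i) $\Leftrightarrow$ (iii) as a statement purely about $G,H$ and their dominated subsets, never touching the $k$-uniform families, and prove the hard direction by a greedy system-of-distinct-representatives argument: process the slots $g_{i},h_{j}$ in increasing order of bound, assign least unused values, and show a stall at bound $c$ would force $\mu_{G}(c)+\mu_{H}(c)>c$. This is essentially a Hall-type counting certificate for the same construction (indeed both procedures end up packing the smallest available integers), but your version makes the obstruction explicit in one line rather than carrying an inductive invariant through three cases; the price is that you must separately verify $G'\leq G$ at the end, which the paper gets for free from its invariant $G_{l}\leq G\cap[l]$. Your one-line proof of ``disjoint representatives imply $\mu_{G}(l)+\mu_{H}(l)\leq l$ for all $l$'' via $\mu_{G}(l)+\mu_{H}(l)\leq\mu_{G'\cup H'}(l)\leq l$ is also a slightly cleaner packaging of the paper's (iii) $\Rightarrow$ (i). Both arguments are elementary and complete; yours localizes the combinatorial content away from the families $\mathcal{F}(G),\mathcal{F}(H)$, which arguably makes the equivalence (i) $\Leftrightarrow$ (iii) reusable outside the $k$-uniform setting.
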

\begin{proof} $(i) \Rightarrow (ii)$\\
Take $A \in \mathcal{F}(G), B \in \mathcal{F}(H)$. Then, $A \preceq G$ and $B \preceq H$. Let $A'$ be the set of the first $|G|$ elements of $A$ and $B'$ be the set of the first $|H|$ elements of $B$. Then, $A' \leq G, B' \leq H$. Since $G \sim_{si} H$, we get $A' \cap B' \neq \emptyset$ and therefore $A \cap B \neq \emptyset$.
Thus $\mathcal{F}(G)$ and $\mathcal{F}(H)$ are cross-intersecting.\\ \\
$(ii) \Rightarrow (iii)$\\
Suppose that $\mathcal{F}(G)$ and $\mathcal{F}(H)$ are cross-intersecting. For simplicity, for each $l \in [n]$, we denote $x_{l}=\mu_{G}(l), y_{l}=\mu_{H}(l), z_{l}=x_{l}+y_{l}$. We have the following simple observations. We have $x_{l+1}=x_{l}+1$ if $l+1 \in G$ and $x_{l+1}=x_{l}$ if $l+1 \notin G$. Similarly, we also have $y_{l+1}=y_{l}+1$ if $l+1 \in H$ and $y_{l+1}=y_{i}$ if $l+1 \notin H$. Therefore, $z_{l+1}-z_{l} \in \{0,1,2\}$.\\
Assume, for a contradiction, that $\mu_{G}(l)+\mu_{H}(l) \leq l$ for all $1 \leq l \leq n$. Thus, $z_{l} \leq l$, for all $1 \leq l \leq n$. By induction, for each $l$, we will construct the sets $G_{l}$ and $H_{l}$ satisfying simultaneously the following conditions: $G_{l} \leq G \cap [l], H_{l} \leq H \cap [l], G_{l} \cap H_{l}= \emptyset$ and $G_{l} \cup H_{l}=[z_{l}]$.\\ 

Consider $l=1$. We have $z_{1} \leq 1$. So $z_{1}=0$ or $z_{1}=1$. If $z_{1}=0$ then we take $G_{1}=H_{1}= \emptyset$ and we see that the above conditions are trivially satisfied. We consider the case where $z_{1}=x_{1}+y_{1}=1$. We may assume that $x_{1}=1$ and $y_{1}=0$. This means that $1 \in G$ and $1 \notin H$. We take $G_{1}=\{1\} \leq G \cap [1]$ and $H_{1}= \emptyset \leq H \cap [1]$. The above conditions are also satisfied.\\
Assume that we built two sets $G_{l}$ and $H_{l}$ that satisfy the conditions $G_{l} \leq G \cap [l], H_{l} \leq H \cap [l], G_{l} \cap H_{l}= \emptyset$ and $G_{l}\cup H_{l}=[z_{l}]$. We will build the sets $G_{l+1}$ and $H_{l+1}$. There are three possible cases. \\
\textbf{ The case of $z_{l+1}=z_{l}$}\\
We have $l+1 \notin G\cup H$. We set $G_{l+1}=G_{l}$ and $H_{l+1}=H_{l}$. We have, by the induction hypothesis, $G_{l+1}=G_{l} \leq G \cap [l]= G \cap [l+1]$ and $H_{l+1}=H_{l} \leq H \cap [l]= H \cap [l+1]$. Furthermore, $G_{l+1} \cap H_{l+1}= G_{l} \cap H_{l}= \emptyset$ and $G_{l+1} \cup H_{l+1}=G_{l} \cup H_{l}=[z_{l}]=[z_{l+1}]$. \\
\textbf{ The case of $z_{l+1}=z_{l}+1$}\\
We have $x_{l+1}=x_{l}+1, y_{l+1}=y_{l}$ or $y_{l+1}=y_{l}+1, x_{l+1}=x_{l}$. Without losing generality, we can see that $x_{l+1}=x_{l}+1, y_{l+1}=y_{l}$. Thus, $l+1 \in G$ and $l+1 \notin H$. We set $G_{l+1}=G_{l} \cup \{z_{l+1}\}$ and $H_{l+1}=H_{l}$. According to the induction hypothesis, $G_{l} \leq G \cap [l]$ and $H_{l} \leq H \cap [l]$. Note that $z_{l+1} \leq l+1$. It implies $G_{l+1}=G_{l} \cup \{z_{l+1}\} \leq G\cap [l+1]$ and $H_{l+1}=H_{l} \leq H\cap [l]=H \cap [l+1]$.   Also, by the induction hypothesis, we have $G_{l+1} \cap H _{l+1}= (G_{l} \cup \{z_{l+1}\}) \cap H_{l}=(G_{l} \cap H_{l})\cup (H_{l} \cap \{z_{l+1}\}=\emptyset$ since $z_{l+1} \notin H_{l}$. We also have $G_{l+1} \cup H_{l+1}=(G_{l} \cup H_{l}) \cup \{z_{l+1}\}=[z_{l}] \cup \{z_{l+1}\}=[z_{l+1}]$.\\ 
\textbf{ The case of $z_{l+1}=z_{l}+2$}\\
In this case, we have $x_{l+1}=x_{l}+1$ and $ y_{l+1}=y_{l}+1$. So $l+1 \in G \cap H$. We set $G_{l+1} =G_{l} \cup \{z_{l}+1\}$ and $H_{l+1} =H_{l} \cup \{z_{l}+2\}$. We check all the conditions. First, $G_{l+1} \cup H_{l+1} = (G_{l} \cup \{z_{l+1}-1\}) \cup (H_{l} \cup \{z_{l+1}\})= (G_{l} \cup H_{l}) \cup \{z_{l+1}-1\}  \cup \{z_{l+1}\}=[z_{l}]\cup \{z_{i+1}-1\} \cup\{z_{l+1}\}=[z_{l+1}]$. Next, since $G_{i} \cap H_{l}= \emptyset$, we have $G_{l+1} \cap H_{l+1}= (G_{l} \cup \{z_{l}+1\}) \cap (H_{l} \cup \{z_{l}+2\})= \emptyset$. Finally, since $l+1 \in G \cap H$ and $G_{l} \leq G \cap [l]$, we have $G_{l+1}=G_{l} \cup \{z_{l}+1\} \leq (G \cap [l]) \cup \{l+1\}=G \cap [l+1]$. Similarly, we also have $H_{l+1} \leq H \cap [l+1]$.\\ 
Now, we let $m = \text{ max } \{g_{1},\ldots,g_{r}, h_{1},\ldots,h_{s}\}$. Note that $G \cap [m] =G, H \cap [m]=H$ and $m \leq |G|+|H|$. Applying the above construction for $l=m$, we get two sets $G_{m}, H_{m}$ satisfying the conditions: $G_{m} \leq G, H_{m} \leq H, G_{m} \cup H_{m}=[m]$ and $G_{m} \cap H_{m}=\emptyset$. We take two sets $K,J \subseteq[m+1,n]$ such that $|K|=k-|G|, |J|=k-|H|$ and $K \cap J=\emptyset$ (it is possible to choose two sets $K$ and $J$ because $k-|G|+k-|H| \leq n-m )$. We define $A=G_{m} \cup K$ and $B=H_{m} \cup J$. We have $|A|=|B|=k$, $A \preceq G$ and $B \preceq H$. Thus, $A \in \mathcal{F}(G)$ and $B \in \mathcal{F}(G)$. Since $\mathcal{F}(G)$ and $\mathcal{F}(H)$ are cross-intersecting, we have $A \cap B \neq \emptyset$.
However, this leads to a contradiction, since by the construction of the sets $A$ and $B$, we clearly have $A \cap B =\emptyset$. Thus, there exists $l \in [n]$ such that $\mu_{G}(l)+\mu_{H}(l) > l$ as desired.\\
$(iii) \Rightarrow (i)$\\
Assume that there exists $l \in [n]$ such that $\mu_{G}(l) +\mu_{H}(l) > l$. We need to prove that $G$ and $H$ are strongly intersecting. Take any $G' \leq G$ and $H' \leq H$. By Proposition 2.1, we have $\mu_{G'}(l) \geq \mu_{G}(l)$ and $\mu_{H'}(l) \geq \mu_{H}(l)$. Thus, $\mu_{G'}(l) +\mu_{H'}(l) \geq \mu_{G}(l) +\mu_{H}(l) > l$. This implies that $(G' \cap [l])\cap (H' \cap [l]) \neq \emptyset$ and therefore, $G' \cap H' \neq \emptyset$. Thus, $G \sim_{si} H$. 
\end{proof}
For $G,H \in 2^{[n]}$, if there exists a number $1 \leq l \leq n$ such that 
$\mu_{G}(l)+\mu_{H}(l) > l$, then we say that {\bf \emph{ $G$ and $H$ are strongly intersecting at $l$ }} and write $G \sim_{si} H$ at $l$.\\ \\
\textbf{ Proof of Theorem 1.1}\\
From the Lemma \ref{lem1}, we see that the family $\mathcal{F}(\mathcal{G})$ is intersecting if and only if $\mathcal{G}$ is strongly intersecting. This means that, $G \sim_{si} H$ for every $G,H \in \mathcal{G}$. From the Lemma \ref{lem2}, we have that $G \sim_{si} H$ if anđ only if there exists a number $1 \leq l \leq n$ such that 
$\mu_{G}(l)+\mu_{H}(l) > l$. Thus, Theorem \ref{thr} follows directly from the lemmas \ref{lem1} and \ref{lem2}.
\subsection{Comparisons}
{There are some comparisons between Theorem 1.1 and The Condition of Bond. Firstly, Theorem 1.1 provides a more 'flexible' condition, as it only applies to subsets of 
$[n]$ (specifically, subsets of 
$[2k]$), whereas The Condition of Bond imposes a condition on the 
$k$-subsets of the left-compressed family itself. Thus, Bond’s condition follows as a consequence of Theorem 1.1. We prove this as follows. Let $\mathcal{A}$ be a left-compressed family. It is obvious that we only need to prove that if $\mathcal{A}$ is intersecting then for any $A=\{a_{1},\ldots,a_{k}\} \in \mathcal{A}$ and $B=\{b_{1},\ldots,b_{k}\} \in \mathcal{A}$, there exists $1 \leq i,j \leq k$ such that $i+j > max \{a_{i},b_{j}\}$. Since $\mathcal{A}$ is left-compressed, $\mathcal{A}$ can be described in terms of its maximal elements. Let $\mathcal{G}$ be the collection of all maximal of $\mathcal{A}$. Note that each $G \in \mathcal{G}$ is a $k$-set. We can see that $\mathcal{A}$ is generated by $\mathcal{G}$. Indeed, for any $A \in \mathcal{A}$, there exists $G \in \mathcal{G}$ such that $A \leq G$. It implies $A \preceq G$ and so $A \in  \mathcal{F}(\mathcal{G})$. Conversely, take any $A \in \mathcal{F}(\mathcal{G})$. We have $A \preceq G$ for some $G \in \mathcal{G}$. Since both $A$ and $G$ are $k$-set, it follows that $A \leq G$. Since $\mathcal{A}$ is left-compressed and $G \in \mathcal{A}$, we have $A \in \mathcal{A}$. Hence, $\mathcal{A}=\mathcal{F}(\mathcal{G})$. Now, if $\mathcal{A}$ is intersecting, then by Theorem 1.1, $\mathcal{G}$ is strongly intersecting. Take any $A,B \in \mathcal{A}$, then $A \leq G$ and $B \leq H$ for some $G,H \in \mathcal{G}$. We see that there exists $l$ such that $\mu_{G}(l)+\mu_{H}(l) >l$. From Proposition 2.1, we have $\mu_{A}(l) \geq \mu_{G}(l)$ and $\mu_{B}(l) \geq \mu_{H}(l)$. Then $\mu_{A}(l)+\mu_{B}(l) > l$. Let $i=\mu_{A}(l)$ and $j=\mu_{B}(l)$. We have $a_{i} \leq l$ and $b_{j} \leq l$ and it is clear that $i+j > l \geq max\{a_{i},b_{j}\}$.\\
Secondly, Theorem 1.1 supports a more useful tool to construct a left-compressed intersecting family. We only need to check the conditions of sets in $\mathcal{G}$. Note that each set in $\mathcal{G}$ is not necessarily a $k$-subset, so the construction of $\mathcal{G}$ is not complicated. Finally, the result of Bond is based on a property of Frankl ( see \cite{Fr87}). Our proof of Theorem 1.1 is based on the construction of two sets in two separate cross-intersecting families.}\\

\subsection{ Proof of Theorem 1.2}
 In the following, we present the proof of Theorem 1.2, which shows that an LCIF can be extended through suitable modifications of its generators.
\begin{proof} From Theorem 1.1, since $\mathcal{F}(\mathcal{G})$ is an LCIF, we have that $\mathcal{G}$ is strongly intersecting. Take any $A \in \mathcal{F}(\mathcal{G})$, this implies that $A \preceq G$ for some $G \in \mathcal{G}$. We know that $G \preceq \pi(G)$ and so $A \preceq \pi(G)$. It follows that $A \in \mathcal{F}(\pi(\mathcal{G}))$. Next, we prove that $\mathcal{\pi(\mathcal{G}})$ is strongly intersecting. We take any $G,H \in \pi(\mathcal{G})$. We have $G=\pi(G')$ and $H=\pi(H')$ for some $G', H' \in \mathcal{G}$. Since $\mathcal{G}$ is strongly intersecting, from Theorem 1.1 we have $\mu_{G'}(l)+\mu_{H'}(l) > l$, for some $1 \leq l \leq n$. Assume that $G'=\{g_{1},\ldots,g_{r},\ldots,g_{|G'|}\}$ has type $r \leq |G'|$. We prove that $l \leq k+r-1$. First, we consider $r=|G'|$. If $l \geq k+r$ then $\mu_{G'}(l) \leq r \leq l-k$. We have $\mu_{G'}(l)+\mu_{H'}(l) \leq l-k+k=l$, which is a contradiction. Next, we consider $r <|G'|$. We have $g_{r} < k+r <k+r+1 \leq g_{r+1}$. Therefore $g_{t} \geq k+t$ for all $r+1 \leq t \leq |G'|$. We will consider all possible cases for $l$. If $l=k+r$ then $\mu_{G'}(l)=l-k$ and thus, we again reach a contradiction. If $l \geq g_{|G'|}$ then $\mu_{G'}(l)=|G'| \leq g_{|G'|}-k$. It follows that $\mu_{G'}(l)+\mu_{H'}(l) \leq g_{|G'|} -k +k = g_{|G'|} \leq l$. Finally, we consider $g_{r+1} \leq l < g_{|G'|}$. We find that there exists an index $r+1 \leq t < |G'|$ such that $g_{t} \leq l < g_{t+1}$. We have $\mu_{G'}(l) =t \leq g_{t}-k$. So, $\mu_{G'}(l)+\mu_{H'}(l) \leq g_{t}-k+k=g_{t} \leq l$. This is a contradiction. Hence, in all cases, we have $l \leq k+r-1$. Now, since $G=\pi(G')=\{g_{1},\ldots,g_{r}\}$ and $g_{r} \leq k+r-1$, we have $\mu_{G}(l)=\mu_{G'}(l)$. Similarly, we also have $\mu_{H}(l)=\mu_{H'}(l)$. Thus, $\mu_{G}(l)+\mu_{H}(l)=\mu_{G'}(l)+\mu_{H'}(l) >l$. Consequently, from Theorem 1.1, we have $G \sim_{si} H$ at $l$ and therefore $\pi(\mathcal{G})$ is strongly intersecting.
\end{proof}
    
\section{FURTHER DIRECTIONS}
{ It is known that if $\mathcal{G} \subseteq 2^{[n]},$ then $\mathcal{F}(\mathcal{G})$ is left-compressed. We also know that if $\mathcal{G}$ is strongly intersecting, then $\mathcal{F}(\mathcal{G})$ is LCIF. Lemma 3.1 states that if $\mathcal{F}(\mathcal{G})$ is MLCIF, then $|G\cap[k+|G|-1]|=|G|$ for all $G \in \mathcal{G}$. Is this condition sufficient for $\mathcal{F}(\mathcal{G})$ to be an MLCIF? Hence, it is natural to pose the following question.\\
\textbf{ Question 1.} \textit{ Under what conditions on $\mathcal{G}$ is the family $\mathcal{F}(\mathcal{G})$ an MLCIF?} \\ \\
Theorem 1.2 allows us to extend an LCIF by “enlarging” the families $\mathcal{F}(G)$, where each $G$ is a generator of it. Therefore, a natural question arises: by this method, can we obtain an MLCIF that contains the original family?\\
\textbf{ Question 2 .} \textit{ Describe how to extend an LCIF to an MLCIF.} \\

\end{document}